\newtheorem{thm}{Theorem}
\newcommand{\bsa}{\boldsymbol{a}}
\newcommand{\bsb}{\boldsymbol{b}}
\newcommand{\bsx}{\boldsymbol{x}}
\newcommand{\bsy}{\boldsymbol{y}}
\newcommand{\bsk}{\boldsymbol{k}}
\newcommand{\bsgamma}{\boldsymbol{\gamma}}
\newcommand{\cH}{{\cal H}}
\newcommand{\bszero}{\boldsymbol{0}}
\newcommand{\rd}{\,\mathrm{d}}
\newcommand{\NN}{\mathbb{N}}
\newcommand{\RR}{\mathbb{R}}
\newcommand{\E}{\mathbb{E}}
\newcommand{\TODO}[1]{}
\newcommand{\EXCLUDE}[1]{}
\begin{document}

\title{Tractability of Monte Carlo integration in Hermite spaces}

\author{Christian Irrgeher\thanks{C. Irrgeher is supported by the Austrian Science Fund (FWF): Project F5509-N26, which is a part of the Special Research Program ``Quasi-Monte Carlo Methods: Theory and Applications''}}

\maketitle

\begin{abstract}
We consider multivariate integration in the randomized setting. The function spaces which we study are defined on $\RR^s$ with respect to the Gaussian measure and the functions are characterized by the decay of their Hermite coefficients. We study tractability of Monte Carlo integration and give necessary and sufficient conditions to achieve tractability.
\end{abstract}
 
\noindent\textbf{Keywords:} Monte Carlo integration, Tractability, Hermite space
 
\noindent\textbf{2010 MSC:} 65Y20, 65C05


\section{Introduction}\label{sectractability}

Study tractability of multivariate problems, like integration, in reproducing kernel Hilbert spaces goes back to the works of Hickernell \cite{H95} and Sloan and Wo\'zniakowski \cite{SW98}. Since then different notions of tractability were studied for multivariate problems in various function spaces. However, there are only a few results about tractability of multivariate integration of functions defined on unbounded domains, see e.g., \cite{KWW06, NK14, WW02}. In this paper we want to consider tractability of integration in spaces of functions defined on $\RR^s$. For that, we consider the problem of approximating integrals of the form
\begin{align}\label{eq:int}
I_s(f)=\int_{\RR^s} f(\bsx) \varphi_s(\bsx) d \bsx
\end{align}
where $\varphi_s$ denotes the density of the $s$-dimensional standard Gaussian measure, 
\begin{align*}
\varphi_s(\bsx)=\frac{1}{(2 \pi)^{s/2}} \exp\left(-\frac{\bsx \cdot \bsx}{2}\right),
\end{align*}
where ``$\cdot$'' denotes the standard inner product on $\RR^s$. Moreover, we consider integrands $f$ which belong to a reproducing kernel Hilbert space $\mathcal{H}(K)$ with norm $\| \cdot \|_K$. 

In \cite{IL14} reproducing kernel Hilbert spaces, so-called Hermite spaces, are introduced for which the problems \eqref{eq:int} are well-defined. These function spaces are defined on the $\RR^s$ with respect to the Gaussian measure and they are based on Hermite polynomials. For Hermite spaces of functions with polynomially and exponentially decaying Hermite coefficients tractability of multivariate integration was already studied in the worst case setting, see \cite{IL14} and \cite{IKLP14}.

In this paper we are interested in approximations of \eqref{eq:int} obtained by Monte Carlo (MC) integration rules which are randomized linear algorithms with equal weights and randomly chosen integration nodes. That is, we study tractability in the randomized setting, for more details see Chapter 7 in \cite{NW08}. We will proceed as in \cite{sw01} and \cite{dp05} where tractability of MC integration is studied for the Korobov space and for the Walsh space, respectively. 

The rest of the paper is structured as follows: In Section \ref{secspace} we introduce the general concept of Hermite spaces. Moreover, we present two interesting classes of Hermite spaces: Hermite spaces of functions with polynomially decaying Hermite coefficients and Hermite spaces of functions with exponentially decaying Hermite coefficients. Section \ref{sectrac} deals with tractability of MC integration in Hermite spaces and we give necessary and sufficient conditions to achieve different notions of tractability.

\section{The Hermite spaces}\label{secspace}

We start by introducing some basic facts on \textit{Hermite polynomials}. For more details on Hermite polynomials we refer to \cite{B98, s77,szeg}. For $k\in\NN_0$ the $k$th Hermite polynomial is given by 
\begin{align*}
H_k(x)=\frac{(-1)^k}{\sqrt{k!}} \exp(x^2/2) \frac{\rd^k}{\rd x^k} \exp(-x^2/2),
\end{align*}
where we follow the definition given in \cite{B98}. We remark that there are slightly different ways to introduce Hermite polynomials, see, e.g., \cite{szeg}. For $s \ge 2$, $\bsk=(k_1,\ldots,k_s)\in \NN_0^s$, and $\bsx=(x_1,\ldots,x_s)\in \RR^s$ we define $s$-dimensional Hermite polynomials by 
\begin{align*}
H_{\bsk}(\bsx)=\prod_{j=1}^s H_{k_j}(x_j).
\end{align*} 
It is well-known, see \cite{B98}, that the Hermite polynomials $\{H_{\bsk}(\bsx)\}_{\bsk \in \NN_0^s}$ form an orthonormal basis of the space $L^2(\RR^s,\varphi_s)$ of function which are square-integrable with respect to the Gaussian measure. 

Now we are going to define function spaces based on Hermite polynomials. These kind of function spaces were first introduced in \cite{IL14}. Let $r: \NN_0^s \rightarrow \RR^+$ be a summable function, i.e., $\sum_{\bsk\in\NN_0^s} r(\bsk) < \infty$. Define a kernel function 
\begin{align*}
K_{r}(\bsx,\bsy)=\sum_{\bsk \in \NN_0^s} r(\bsk) H_{\bsk}(\bsx) H_{\bsk}(\bsy)\ \ \ \ \mbox{ for }\ \ \bsx,\bsy \in \RR^s
\end{align*}
and an inner product 
\begin{align*}
\langle f,g\rangle_{K_{r}} =\sum_{\bsk \in \NN_0^s} \frac{1}{r(\bsk)} \widehat{f}(\bsk) \widehat{g}(\bsk),
\end{align*}
where $\widehat{f}(\bsk)=\int_{\RR^s} f(\bsx) H_{\bsk}(\bsx) \varphi_s(\bsx)\rd \bsx$ is the $\bsk$th \textit{Hermite coefficient} of $f$. Since $K_{r}$ is symmetric and positive semi-definite, we indeed have that $K_{r}$ is a reproducing kernel, see, e.g.\ \cite[Chapter 2.3]{dp05}. Let us denote by $\mathcal{H}(K_r)$ the reproducing kernel Hilbert space corresponding to $K_{r}$. The function space $\cH(K_r)$ is called a {\em Hermite space} and the norm in $\mathcal{H}(K_r)$ is defined in the natural way by $\| f \|_{K_{r}}^2 =\langle f , f \rangle_{K_{r}}$. More details on reproducing kernel Hilbert spaces can be found in \cite{A50}.

Note that a Hermite space $\cH(K_r)$ is fully specified by the function $r$ which regulates the decay of the Hermite coefficients of the functions belonging to $\cH(K_r)$. Roughly speaking, the faster $r$ decreases as $\bsk$ grows, the faster the Hermite coefficients of the elements of $\mathcal{H}(K_r)$ decrease. 

In this paper we deal with two important classes of Hermite spaces, namely Hermite spaces of functions with polynomially decaying Hermite coefficients and Hermite spaces of functions with exponentially decaying Hermite coefficients. Moreover, we introduce weights to the norm of these function spaces to control the influence of each coordinate.

\subsection{Hermite spaces of finite smoothness}

To define our function $r$, we first choose a weight sequence of positive real numbers, $\bsgamma=\{\gamma_j\}_{j\in\NN}$ with $\gamma_j>0$, where we assume that 
\begin{equation}
\gamma_1\ge \gamma_2\ge \gamma_3\ge\ldots.
\end{equation}
Furthermore, we fix a parameter $\alpha\in (1,\infty)$. For $k\in\NN_0$ we consider
\begin{align*}
r_{\alpha,\gamma_j}(k)=\begin{cases}1&\textnormal{if } k=0,\\\gamma_j k^{-\alpha} & \textnormal{if } k\neq 0.\end{cases}
\end{align*}
For a vector $\bsk=(k_1,\ldots,k_s)\in\NN_0^s$ we consider  
\begin{align*}
r_{s,\alpha,\bsgamma}(\bsk)= \prod_{j=1}^{s}r_{\alpha,\gamma_j}(k_j).
\end{align*}
Clearly, it holds that $r_{s,\alpha,\bsgamma}$ is summable. From now on, we use the following notation for the kernel function, 
\begin{align*}
K_{s,\alpha,\bsgamma} (\bsx,\bsy):=\sum_{\bsk\in\NN_0^s} r_{s,\alpha,\bsgamma}(\bsk) H_{\bsk}(\bsx) H_{\bsk} (\bsy),
\end{align*}
to stress that the reproducing kernel depends on $\alpha$ as well as on the weight sequence $\bsgamma$. The corresponding reproducing kernel Hilbert space is then given by $\cH(K_{s,\alpha,\bsgamma})$. This choice of $r$ now decreases polynomially fast as $\bsk$ grows, which influences the smoothness of the elements in $\cH(K_{s,\alpha,\bsgamma})$. In \cite{IL14} it is shown that the smoothness parameter $\alpha$ is related to the differentiability of the functions which makes it reasonable to call $\cH(K_{s,\alpha,\bsgamma})$ a Hermite space of finite smoothness.

\subsection{Hermite spaces of analytic functions}

Let $\bsa=\{a_j\}_{j\in\NN}$ and $\bsb=\{b_j\}_{j\in\NN}$ be two weight sequences of real numbers, where we assume that $a_0:=\inf_j a_j>0$ and $b_0:=\inf_j b_j\ge 1$.
Moreover, we fix an $\omega\in(0,1)$ and for $\bsk\in\NN_0^s$ we define
\begin{align}
r_{s,{\omega,\bsa,\bsb}}(\bsk)=\omega^{\vert \bsk\vert_{\bsa,\bsb}}:=\omega^{\sum_{j=1}^{s}a_j{k_j}^{b_j}}=\prod_{j=1}^{s}\omega^{a_j{k_j}^{b_j}}.
\end{align}
We denote the reproducing kernel function by
\begin{align*}
K_{s,\omega,\bsa,\bsb}(\bsx,\bsy)=\sum_{\bsk\in\NN_0^d}\omega^{\vert \bsk\vert_{\bsa,\bsb}}H_{\bsk}(\bsx)H_{\bsk}(\bsy)
\end{align*}
to indicate again the dependence on the weights. The corresponding Hermite space is then given by $\cH(K_{s,\omega,\bsa,\bsb})$. With the choice of $r_{s,\omega,\bsa,\bsb}$ it follows that the functions in $\cH(K_{s,\omega,\bsa,\bsb})$ have exponentially decaying Hermite coefficients. Furthermore, this exponential decay guarantees that the functions are extremely smooth, in fact analytic, see \cite{IKLP14}.

\section{Tractability of Monte Carlo integration}\label{sectrac}

Now we study Monte Carlo integration in a Hermite space $\cH(K_r)$. For that we consider MC integration rules which are randomized linear algorithms of the form
\begin{align*}
\mathrm{MC}_{n,s}(\bsx_1,\ldots,\bsx_n;f)=\frac{1}{n}\sum_{i=1}^{n}f(\bsx_i),
\end{align*}
with independent and standard normal distributed random variables $\bsx_1,\ldots,\bsx_n$. In this setting we are interested in the randomized error of a MC algorithm which is given by
\begin{align*}
e^{\mathrm{MC}}(n,s)=\sup_{f\in\cH(K_r),\|f\|_{K_r}\leq 1}\E\left(\left\vert I_s(f)-\mathrm{MC}_{n,s}(\bsx_1,\ldots,\bsx_n;f)\right\vert^2\right)^{\frac{1}{2}},
\end{align*}
where the expectation is taken with respect to independent and identically distributed random variables $\bsx_1,\ldots,\bsx_n$. Furthermore, we consider the minimal number of function evaluations which is needed to reduce the initial error by a factor of $\varepsilon\in(0,1)$, i.e.,
\begin{align*}
n^{\mathrm{MC}}(\varepsilon,s)=\min\{n : e^{\mathrm{MC}}(n,s)\leq \varepsilon\}.
\end{align*}
Note that the initial error is $1$. We want to know how $n^{\mathrm{MC}}(\varepsilon,s)$ depends on $\varepsilon^{-1}$ and $s$. For that we study the tractability of MC algorithms where we follow the notions given in \cite{NW08}. We say that we have:
\begin{enumerate}
	\item \emph{Weak MC-tractability}, if
	\begin{align*}
	\lim_{s+\varepsilon^{-1}\to\infty}\frac{\log(n^{\mathrm{MC}}(\varepsilon,s))}{s+\varepsilon^{-1}}=0.
	\end{align*}
	\item \emph{Polynomial MC-tractability}, if there exist $c,p,q\in\RR^{+}$ such that
	\begin{align*}
	n^{\mathrm{MC}}(\varepsilon,s)\leq c\, s^{q}\, \varepsilon^{-p}\qquad\textnormal{for all }\quad s\in\NN,\ \varepsilon\in(0,1).
	\end{align*}
	\item \emph{Strong polynomial MC-tractability}, if there exist $c,p\in\RR^{+}$ such that
	\begin{align*}
	n^{\mathrm{MC}}(\varepsilon,s)\leq c\, \varepsilon^{-p}\qquad\textnormal{for all }\quad s\in\NN,\ \varepsilon\in(0,1).
	\end{align*}
The infimum of $p$ for which strong polynomial MC-tractability holds is called $\varepsilon$-exponent.

\end{enumerate}

With weak MC-tractability we rule out that the smallest number of function evaluations needed to achieve an $\varepsilon$-approximation depends exponentially on $\varepsilon^{-1}$ and $s$. Polynomial MC-tractability means that $n^{\mathrm{MC}}(\varepsilon,s)$ is bounded polynomially in $\varepsilon^{-1}$ and $s$. In the case of strong polynomial MC-tractability the upper bound is a polynomial in $\varepsilon^{-1}$ and independent of the dimension $s$.

First we derive a formula for the randomized error where we see that the error depends on the number of integration nodes by a factor of $1/\sqrt{n}$. This coincides with the convergence rate of MC algorithms of $\mathcal{O}(n^{1/2})$.

\begin{thm}\label{th:randomizederror}
For the randomized error of MC integration in the Hermite space $\cH(K_r)$ it holds that
\begin{align*}
e^{\mathrm{MC}}(n,s)=\frac{1}{\sqrt{n}}\left(\max_{\bsk\in\NN_0^s\backslash\{\bszero\}}r(\bsk)\right)^{\frac{1}{2}}.
\end{align*}
\end{thm}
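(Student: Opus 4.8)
The plan is to compute the variance of the Monte Carlo estimator directly, exploiting the fact that the integration nodes are standard Gaussian — hence distributed exactly according to the measure $\varphi_s$ that defines both the integral $I_s$ and the $L^2$-structure of the Hermite space. First I would note that $\mathrm{MC}_{n,s}$ is an unbiased estimator: since each $\bsx_i$ has density $\varphi_s$, we have $\E[f(\bsx_i)] = I_s(f)$, so $\E[\mathrm{MC}_{n,s}(\bsx_1,\ldots,\bsx_n;f)] = I_s(f)$. By independence of the $\bsx_i$, the mean square error equals $\frac{1}{n}\,\mathrm{Var}(f(\bsx_1)) = \frac{1}{n}\bigl(\E[f(\bsx_1)^2] - I_s(f)^2\bigr)$. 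Now $\E[f(\bsx_1)^2] = \int_{\RR^s} f(\bsx)^2 \varphi_s(\bsx)\rd\bsx = \|f\|_{L^2(\RR^s,\varphi_s)}^2$, and since $\{H_{\bsk}\}_{\bsk\in\NN_0^s}$ is an orthonormal basis of $L^2(\RR^s,\varphi_s)$, Parseval gives $\|f\|_{L^2}^2 = \sum_{\bsk\in\NN_0^s}\widehat{f}(\bsk)^2$. Also $I_s(f) = \widehat{f}(\bszero)$ because $H_{\bszero}\equiv 1$. Hence
\begin{align*}
\E\left(\left\vert I_s(f)-\mathrm{MC}_{n,s}(\bsx_1,\ldots,\bsx_n;f)\right\vert^2\right) = \frac{1}{n}\sum_{\bsk\in\NN_0^s\backslash\{\bszero\}}\widehat{f}(\bsk)^2.
\end{align*}

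Next I would take the supremum over the unit ball $\{f\in\cH(K_r) : \|f\|_{K_r}\le 1\}$. Using the constraint $\|f\|_{K_r}^2 = \sum_{\bsk}\widehat{f}(\bsk)^2/r(\bsk)\le 1$, the task reduces to maximizing $\sum_{\bsk\neq\bszero}\widehat{f}(\bsk)^2$ subject to $\sum_{\bsk\neq\bszero}\widehat{f}(\bsk)^2/r(\bsk)\le 1$ (the $\bszero$-coefficient is free but irrelevant to the objective, so we may set it to zero). Writing $c_{\bsk} = \widehat{f}(\bsk)^2/r(\bsk)\ge 0$, the objective becomes $\sum_{\bsk\neq\bszero} r(\bsk)\,c_{\bsk}$ with $\sum_{\bsk\neq\bszero}c_{\bsk}\le 1$; this is a linear functional on the simplex, whose supremum is $\sup_{\bsk\neq\bszero} r(\bsk)$. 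Since $r$ is summable and positive, $r(\bsk)\to 0$ and the supremum is attained, so it is a maximum; the extremizer is (a scalar multiple of) a single basis function $H_{\bsk^\ast}$ at an index $\bsk^\ast$ achieving $\max_{\bsk\neq\bszero}r(\bsk)$. Taking square roots yields $e^{\mathrm{MC}}(n,s) = n^{-1/2}\bigl(\max_{\bsk\in\NN_0^s\backslash\{\bszero\}}r(\bsk)\bigr)^{1/2}$.

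The only genuinely delicate point is justifying the exchange of expectation and the infinite Hermite expansion of $f^2$ — i.e., that $\E[f(\bsx_1)^2]$ really equals $\sum_{\bsk}\widehat{f}(\bsk)^2$ for every $f\in\cH(K_r)$, and more basically that $f\in\cH(K_r)$ implies $f\in L^2(\RR^s,\varphi_s)$. This follows because summability of $r$ makes the embedding $\cH(K_r)\hookrightarrow L^2(\RR^s,\varphi_s)$ bounded: $\|f\|_{L^2}^2 = \sum_{\bsk}\widehat{f}(\bsk)^2 = \sum_{\bsk} r(\bsk)\,\bigl(\widehat{f}(\bsk)^2/r(\bsk)\bigr)\le \bigl(\sup_{\bsk}r(\bsk)\bigr)\|f\|_{K_r}^2 < \infty$, so the series $\sum_{\bsk}\widehat{f}(\bsk)H_{\bsk}$ converges in $L^2(\RR^s,\varphi_s)$ to $f$ and Parseval applies without further ado. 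Everything else is the elementary variance computation for i.i.d.\ sampling plus the one-line optimization over the simplex. The upper bound $e^{\mathrm{MC}}(n,s)\le n^{-1/2}(\sup_{\bsk\neq\bszero}r(\bsk))^{1/2}$ is immediate from the computation; the matching lower bound comes from plugging in the near-extremal test functions $f = \sqrt{r(\bsk^\ast)}\,H_{\bsk^\ast}$, which lie in the unit ball and achieve the value exactly.
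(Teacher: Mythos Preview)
Your proposal is correct and follows essentially the same route as the paper: reduce the mean-square error to $\frac{1}{n}\sum_{\bsk\neq\bszero}\widehat f(\bsk)^2$ via Parseval and $I_s(f)=\widehat f(\bszero)$, bound this by $\|f\|_{K_r}^2\max_{\bsk\neq\bszero}r(\bsk)$, and attain equality with $f$ proportional to $H_{\bsk^\ast}$. The only cosmetic differences are that the paper invokes \cite{N92} for the variance identity rather than deriving it, and does not spell out the $L^2$-embedding or the simplex reformulation you add for rigor.
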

\begin{proof}
We know for the randomized error that
\begin{align*}
e^{\mathrm{MC}}(n,s)=\frac{1}{\sqrt{n}}\sup_{f\in\cH(K_r),\|f\|_{K_r}\leq 1}\left(I_s(f^2)-I_s(f)^2\right)^{\frac{1}{2}},
\end{align*}
see, e.g., \cite[Theorem 1.1]{N92}. Moreover, by Parseval's identity,
\begin{align*}
I_s(f^2)=\int_{\RR^s}f(\bsx)^2\varphi_s(\bsx)d\bsx=\sum_{\bsk\in\NN_0^s}\hat{f}(\bsk)^2
\end{align*}
and 
\begin{align*}
I_s(f)=\int_{\RR^d}f(\bsx)\varphi_s(\bsx)d\bsx=\widehat{f}(\bszero).
\end{align*}
Hence,
\begin{align}
I_s(f^2)-I_s(f)^2&=\sum_{\bsk\in\NN_0^s\backslash\{\bszero\}}\hat{f}(\bsk)^2\nonumber\\
&=\sum_{\bsk\in\NN_0^s\backslash\{\bszero\}}r(\bsk)^{-1}\hat{f}(\bsk)^2\,r(\bsk)\nonumber\\
&\leq\|f\|_{K_r}^2\max_{\bsk\in\NN_0^s\backslash\{\bszero\}}r(\bsk)\label{eq:mcinequality}
\end{align}
Now we set $\bsk^*=\mathrm{arg\,max}_{\bsk\in\NN_0^s\backslash\{\bszero\}}r(\bsk)$ and we consider the special integrand $f(\bsx)=H_{\bsk^*}(\bsx)$. Then we get for the $\bsk$-th Hermite coefficient of $f$,
\begin{align*}
\hat{f}(\bsk)=\begin{cases}1&\textnormal{if }\bsk=\bsk^*\\0&\textnormal{otherwise}\end{cases}.
\end{align*}
Thus, we have that \eqref{eq:mcinequality} is fulfilled with equality for this choice of $f$. Hence, it follows that
\begin{align*}
e^{\mathrm{MC}}(n,s)=\frac{1}{\sqrt{n}}\left(\max_{\bsk\in\NN_0^s\backslash\{\bszero\}}r(\bsk)\right)^{\frac{1}{2}}.
\end{align*}
\end{proof}

\subsection{Tractability in Hermite spaces of finite smoothness}

Now we consider MC-tractability of multivariate integration in Hermite spaces $\cH(K_{s,\alpha,\bsgamma})$ of functions of finite smoothness. Since
\begin{align*}
\max_{\bsk\in\NN_0^s\backslash\{\bszero\}}r_{s,\alpha,\bsgamma}(\bsk)=\max_{k=1,\ldots,s}\prod_{j=1}^{k}\gamma_j,
\end{align*}
we get from Theorem \ref{th:randomizederror} that the randomized error of MC integration is given by
\begin{align}\label{eq:randomizederrorpoly}
e^{\mathrm{MC}}(n,s)=\frac{1}{\sqrt{n}}\,\left(\max_{k=1,\ldots,s}\prod_{j=1}^{k}\gamma_j\right)^\frac{1}{2}.
\end{align}
We remark that this result is similar to the result in \cite{sw01} and therefore we proceed in the same way to study MC-tractability for the integration problem. 

From \eqref{eq:randomizederrorpoly} we see that MC integration is strongly polynomially MC-tractable if and only if $\sup_{s\in\NN}\prod_{j=1}^{s}\gamma_j<\infty$. Assume there exists a $j$ with $\gamma_j<1$, then we have that $\gamma_i<1$ for all $i\geq j$. Now let $j_0$ the smallest index such that $\gamma_{j_0}<1$. Then $\sup_{s\in\NN}\prod_{j=1}^{s}\gamma_j<\infty$ is equivalent to $\prod_{j=1}^{j_0-1}\gamma_j<\infty$. On the other hand, if $\gamma_j\geq1$ for all $j\in\NN$, we have that $\sup_{s\in\NN}\prod_{j=1}^{s}\gamma_j<\infty$ iff $\prod_{j=1}^{\infty}\gamma_j<\infty$. Altogether, we have that $\sup_{s\in\NN}\prod_{j=1}^{s}\gamma_j<\infty$ is equivalent to $\prod_{j=1}^{\infty}\max(\gamma_j,1)<\infty$ which, in turn, is equivalent to $\sum_{j=1}^\infty\max(\log(\gamma_j),0)<\infty$.  

Furthermore, we see from \eqref{eq:randomizederrorpoly} that we have polynomial MC-trac\-ta\-bility iff there exist $C,q>0$ such that $\max_{k=1,\ldots,s}\prod_{j=1}^{k}\gamma_j\leq C s^{q}$. As above, we get that this is equivalent to $\sup_{s\in\NN}\sum_{j=1}^{s}\max(\log(\gamma_j),0))/\log(s)<\infty$.

Finally, we again conclude from \eqref{eq:randomizederrorpoly} that integration is weakly MC-tratable if and only if $\max_{k=1,\ldots,s}\sum_{j=1}^{k}\log(\gamma_j)/s$ approaches zero as $s$ goes to $\infty$. Again this is equivalent to $\lim_{s\to\infty}\sum_{j=1}^{\infty}\max(\log(\gamma_j),0)/s=0$. Now we summarize our results in the next theorem.

\begin{thm}\label{th:mctractability}
MC integration in the weighted Hermite space $\cH(K_{s,\alpha,\bsgamma})$ is 
\begin{enumerate}
	\item strongly polynomially MC-tractable iff $\sum_{j=1}^{\infty}\max(\log(\gamma_j),0))<\infty$,
	\item\label{it:polymc} polynomially MC-tractable iff $A:=\limsup_{s\to\infty}\sum_{j=1}^{s}\frac{\max(\log(\gamma_j),0)}{\log(s)}<\infty$,
	\item weakly MC-tractable iff $\lim_{s\to\infty}\sum_{j=1}^{s}\frac{\max(\log(\gamma_j),0)}{s}=0$.
\end{enumerate}
\end{thm}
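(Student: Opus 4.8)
The plan is to turn the closed form of the randomized error in \eqref{eq:randomizederrorpoly} into an exact expression for the information cost $n^{\mathrm{MC}}(\varepsilon,s)$ and then read off the three equivalences by elementary asymptotics. Writing $M_s:=\max_{k=1,\dots,s}\prod_{j=1}^{k}\gamma_j$, solving $e^{\mathrm{MC}}(n,s)\le\varepsilon$ for the integer $n$ in \eqref{eq:randomizederrorpoly} gives $n^{\mathrm{MC}}(\varepsilon,s)=\lceil M_s\varepsilon^{-2}\rceil$, hence
\begin{align*}
M_s\varepsilon^{-2}\le n^{\mathrm{MC}}(\varepsilon,s)\le M_s\varepsilon^{-2}+1\le 2\max(1,M_s)\varepsilon^{-2},
\end{align*}
the last inequality using $\varepsilon^{-2}\ge 1$.

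The next step identifies $M_s$ with the quantity in the theorem. Put $L_s:=\sum_{j=1}^{s}\max(\log\gamma_j,0)$, which is non-decreasing in $s$. Since $\gamma_1\ge\gamma_2\ge\cdots$, the partial products $p_k:=\prod_{j=1}^{k}\gamma_j$ are unimodal in $k$ (they increase as long as $\gamma_k\ge1$ and decrease thereafter), so $\max_{0\le k\le s}p_k$ is attained at $k=\#\{j\le s:\gamma_j\ge1\}$; taking logarithms gives $\log\max(1,M_s)=L_s$, i.e.\ $\max(1,M_s)=e^{L_s}$. If $\gamma_1<1$ then $M_s=\gamma_1<1$ and $L_s=0$ for all $s$, so $n^{\mathrm{MC}}(\varepsilon,s)\le 2\varepsilon^{-2}$ and all three right-hand-side conditions hold trivially — a consistent, fully tractable degenerate case — and so in the remaining arguments I may assume $\gamma_1\ge1$, whence $M_s=e^{L_s}$ and the previous display reads $e^{L_s}\varepsilon^{-2}\le n^{\mathrm{MC}}(\varepsilon,s)\le 2e^{L_s}\varepsilon^{-2}$.

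The three parts now follow. For (1): the condition $\sum_{j=1}^{\infty}\max(\log\gamma_j,0)<\infty$ says precisely $\sup_s L_s<\infty$ (as $L_s$ is non-decreasing), and then the upper bound shows $n^{\mathrm{MC}}(\varepsilon,s)$ is at most an $s$-independent constant times $\varepsilon^{-2}$, i.e.\ strong polynomial MC-tractability with exponent $p=2$; conversely $n^{\mathrm{MC}}(\varepsilon,s)\le c\varepsilon^{-p}$ forces $e^{L_s}\le c\varepsilon^{2-p}$ for all $\varepsilon\in(0,1)$, where letting $\varepsilon\to0$ excludes $p<2$ and then letting $\varepsilon\to1^-$ yields $e^{L_s}\le c$, i.e.\ $\sup_s L_s\le\log c<\infty$ (the same estimate shows the $\varepsilon$-exponent is $2$ in this case). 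For (2): by the two-sided bound and the same "$\varepsilon\to0$, then $\varepsilon\to1^-$" manoeuvre, polynomial MC-tractability is equivalent to the existence of $C,q>0$ with $e^{L_s}\le Cs^{q}$ for all $s$; in the sufficiency direction one handles the finitely many $s$ on which $L_s$ is bounded separately and uses $L_s\le q\log s$ for large $s$, and dividing $L_s\le\log C+q\log s$ by $\log s$ and taking $\limsup$ recasts this as $A<\infty$. For (3): from the two-sided bound, $\log n^{\mathrm{MC}}(\varepsilon,s)=L_s+2\log(\varepsilon^{-1})+O(1)$, so
\begin{align*}
\frac{\log n^{\mathrm{MC}}(\varepsilon,s)}{s+\varepsilon^{-1}}=\frac{L_s}{s+\varepsilon^{-1}}+\frac{2\log(\varepsilon^{-1})+O(1)}{s+\varepsilon^{-1}};
\end{align*}
the second summand tends to $0$ as $s+\varepsilon^{-1}\to\infty$, since $t^{-1}\log t\to0$ and $\log(\varepsilon^{-1})$ stays bounded exactly when $\varepsilon^{-1}$ does (which then forces $s\to\infty$), while the first summand tends to $0$ if and only if $L_s/s\to0$ as $s\to\infty$ — sufficiency because $L_s/(s+\varepsilon^{-1})\le L_s/s$ once $s$ is large and $L_s$ is bounded when $s$ is bounded, necessity by fixing $\varepsilon$ and letting $s\to\infty$. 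Hence weak MC-tractability holds iff $\lim_{s\to\infty}L_s/s=0$.

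The only genuinely non-mechanical step is the identity $\max(1,M_s)=e^{L_s}$, which is where the ordering $\gamma_1\ge\gamma_2\ge\cdots$ is essential, as it pins down the maximiser of the partial products $p_k$. Once that and the exact formula $n^{\mathrm{MC}}(\varepsilon,s)=\lceil M_s\varepsilon^{-2}\rceil$ are in hand, the rest is a sequence of elementary limits; the only points needing care are the additive $+1$ from the ceiling (harmless since $\varepsilon^{-2}\ge1$), the degenerate case $M_s<1$ (which lies in the fully tractable regime), and, in (3), the $2\log(\varepsilon^{-1})$ term, absorbed via $t^{-1}\log t\to0$.
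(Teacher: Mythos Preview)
Your proof is correct and follows essentially the same route as the paper: both start from the closed-form error \eqref{eq:randomizederrorpoly}, identify $\max_{k\le s}\prod_{j\le k}\gamma_j$ with $\exp\bigl(\sum_{j\le s}\max(\log\gamma_j,0)\bigr)$ via the monotonicity of $\bsgamma$, and then read off each equivalence. Your version is more explicit --- you compute $n^{\mathrm{MC}}(\varepsilon,s)=\lceil M_s\varepsilon^{-2}\rceil$ directly, handle the ceiling and the degenerate case $\gamma_1<1$, and extract the $\varepsilon$-exponent $2$ cleanly --- whereas the paper argues more informally from the error formula, but the underlying logic is the same.
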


Let us give some remarks on Theorem \ref{th:mctractability}. We see that the conditions are necessary and sufficient. Moreover, these conditions are fulfilled, if the weight sequence contains weights which are smaller or equal than $1$. Especially, in the case of the unweighted Hermite space, i.e., $\gamma_j=1$ for all $j\in\NN$, we can achieve these three notions of MC-tractability using randomized linear algorithm. 

Note that, if we have strong polynomial MC-tractability, then the $\varepsilon$-exponent is $2$. Furthermore, the minimal number $n^{\mathrm{MC}}(\varepsilon,s)$ of function evaluations which is needed to guarantee that the randomized error is smaller than $\varepsilon$ is
\begin{align*}
\sup_{s\in\NN}n^{\mathrm{MC}}(\varepsilon,s)=C \varepsilon^{-2}
\end{align*}
with $C=\sup_{s\in\NN}\prod_{j=1}^{s}\gamma_j<\infty$. If we have polynomial MC-tractability, then 
\begin{align*}
n^{\mathrm{MC}}(\varepsilon,s)\leq s^{A+o(1)}\varepsilon^{-2}\qquad\textnormal{as } s\longrightarrow \infty
\end{align*}
with $A$ as in Theorem \ref{th:mctractability}. Furthermore, we remark that the conditions on MC-tractability of multivariate integration in Hermite spaces of finite smoothness are the same as for Monte Carlo integration in Korobov spaces, see \cite{sw01}, and in Walsh spaces, see \cite{dp05}.

\subsection{Tractability in Hermite spaces of analytic functions}

For the Hermite space $\cH(K_{s,\omega,\bsa,\bsb})$ of analytic functions we have that
\begin{align*}
\max_{\bsk\in\NN_0^s\backslash\{\bszero\}}r_{s,\omega,\bsa,\bsb}(\bsk)=\max_{\bsk\in\NN_0^s\backslash\{\bszero\}}\prod_{j=1}^{d}\omega^{a_j{k_j}^{b_j}}=\omega^{a_0}<\infty,
\end{align*}
because $a_0=\inf_j a_j>0$ and $b_j\geq 1$ for all $j\in\NN$. From Theorem \ref{th:randomizederror} we get that
\begin{align}\label{eq:randomizederrorexp}
e^{\mathrm{MC}}(n,s)=\frac{\omega^{a_0}}{\sqrt{n}}
\end{align}
and it is easy to see that we can achieve MC-tractability independent of the choice of the weight sequences $\bsa$ and $\bsb$.

\begin{thm}
MC integration in the weighted Hermite space $\cH(K_{s,\omega,\bsa,\bsb})$ is strongly polynomially MC-tractable, polynomially MC-tractable and weakly MC-tractable for all $\bsa$ and $\bsb$.
\end{thm}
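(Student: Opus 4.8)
The plan is to read off everything from Theorem~\ref{th:randomizederror} and the computation of the maximal Hermite weight already performed above, so the argument is essentially a matter of bookkeeping. First I would recall that, because $a_0=\inf_j a_j>0$ and $b_j\ge 1$ for every $j$, the function $\bsk\mapsto\prod_{j=1}^s\omega^{a_jk_j^{b_j}}$ attains its maximum over $\NN_0^s\setminus\{\bszero\}$ at a unit coordinate vector, so that $\max_{\bsk\in\NN_0^s\setminus\{\bszero\}}r_{s,\omega,\bsa,\bsb}(\bsk)=\omega^{a_0}$, a constant that does \emph{not} depend on $s$. Substituting this into Theorem~\ref{th:randomizederror} yields the closed form $e^{\mathrm{MC}}(n,s)=\omega^{a_0}n^{-1/2}$ recorded in \eqref{eq:randomizederrorexp}.

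Next I would solve $e^{\mathrm{MC}}(n,s)\le\varepsilon$ for $n$. Since $\omega\in(0,1)$ and $a_0>0$, this is equivalent to $n\ge\omega^{2a_0}\varepsilon^{-2}$, hence $n^{\mathrm{MC}}(\varepsilon,s)=\lceil\omega^{2a_0}\varepsilon^{-2}\rceil$ for all $s\in\NN$ and $\varepsilon\in(0,1)$. Using $\lceil x\rceil\le x+1$ together with $\varepsilon^{-2}\ge 1$, one obtains $n^{\mathrm{MC}}(\varepsilon,s)\le(\omega^{2a_0}+1)\,\varepsilon^{-2}$. Thus, with $c:=\omega^{2a_0}+1$ and $p:=2$, the bound $n^{\mathrm{MC}}(\varepsilon,s)\le c\,\varepsilon^{-p}$ holds uniformly in $s$, which is precisely strong polynomial MC-tractability; the constant $c$ depends on $\omega$ and $a_0$ only, and is otherwise insensitive to the weight sequences $\bsa$ and $\bsb$.

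Finally I would note that the remaining two assertions are immediate consequences: strong polynomial MC-tractability implies polynomial MC-tractability (take the same $c$, $p$ and $q=0$), and polynomial MC-tractability implies weak MC-tractability because $\log\bigl(n^{\mathrm{MC}}(\varepsilon,s)\bigr)\le\log c+p\log(\varepsilon^{-1})=o\bigl(s+\varepsilon^{-1}\bigr)$ as $s+\varepsilon^{-1}\to\infty$. I do not expect any genuine obstacle here: the decisive point is already contained in the displayed identity for $\max_{\bsk\ne\bszero}r_{s,\omega,\bsa,\bsb}(\bsk)$, namely that the exponents $b_j\ge 1$ and $a_0>0$ keep this maximum bounded uniformly in the dimension; the only minor care needed is the rounding up to an integer in the definition of $n^{\mathrm{MC}}(\varepsilon,s)$, absorbed by enlarging $c$ by one.
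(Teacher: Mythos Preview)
Your proposal is correct and follows exactly the paper's approach: compute $\max_{\bsk\neq\bszero}r_{s,\omega,\bsa,\bsb}(\bsk)=\omega^{a_0}$, apply Theorem~\ref{th:randomizederror} to obtain a dimension-independent randomized error, and read off strong polynomial (hence polynomial and weak) MC-tractability. (Both your write-up and the paper's displayed formula \eqref{eq:randomizederrorexp} omit the square root coming from Theorem~\ref{th:randomizederror}, so strictly $e^{\mathrm{MC}}(n,s)=\omega^{a_0/2}n^{-1/2}$ and $n^{\mathrm{MC}}(\varepsilon,s)=\lceil\omega^{a_0}\varepsilon^{-2}\rceil$; this is a harmless constant and does not affect any of the tractability conclusions.)
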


In the worst case setting it is natural to expect exponential convergence for studying multivariate integration in the Hermite space of analytic functions, see \cite{IKLP14}. From Theorem \ref{th:randomizederror} it follows that we can not achieve 
exponential convergence for the error in the randomized setting by using standard Monte Carlo integration, but maybe it could be done by more sophisticated randomized algorithms. 

Furthermore, in \cite{IKLP14} notions of tractability are considered to study the dependence of $n^{\mathrm{MC}}$ on $s$ and $\log\varepsilon^{-1}$. If
\begin{align}\label{eq:ecwt}
\lim_{s+\varepsilon^{-1}\to\infty}\frac{\log( n^{\mathrm{MC}}(\varepsilon,s))}{s+\log\varepsilon^{-1}}=0
\end{align}
with $\log 0=0$ taken by convention, it is ruled out that the minimal number $n^{\mathrm{MC}}$ of function evaluations to achieve an $\varepsilon$-approximation to the initial error depends exponentially on $s$ and $\log\varepsilon^{-1}$. However, \eqref{eq:ecwt} cannot hold for Monte Carlo integration, because 
\begin{align*}
n^{\mathrm{MC}}(\varepsilon,s)=\left\lceil \varepsilon^{-2} \omega^{2a_0}\right\rceil.
\end{align*}
We remark that it is possible in the worst case setting to achieve better convergence rates and related notions of tractability, if we restrict ourself to function spaces of analytic functions, see \cite{IKLP14}. However, this is not possible in the randomized setting using Monte Carlo integration as we have seen in this section.

\end{document}